\documentclass[12pt]{amsart}
  
\usepackage{amsmath,amsthm,amssymb,mathrsfs}
 

\numberwithin{equation}{section}





\renewcommand{\geq}{\geqslant}
\renewcommand{\leq}{\leqslant}
\newcommand{\Osh}{{\mathcal O}}                        

\renewcommand{\H}{\mathrm{H}}                          










\newcommand{\G}{\mathrm{G}}


\newcommand{\kk}{\mathbf{k}}

\newcommand{\Vol}{\operatorname{Vol}}

\newcommand{\KK}{\mathbf{K}}

\newcommand{\CC}{\mathbb{C}} 
\newcommand{\PP}{\mathbb{P}} 
\newcommand{\QQ}{\mathbb{Q}} 
\newcommand{\RR}{\mathbb{R}} 
\newcommand{\ZZ}{\mathbb{Z}} 


\newtheorem{theorem}{Theorem}[section]
\newtheorem{lemma}[theorem]{Lemma}

\theoremstyle{definition}
\newtheorem{defn}[theorem]{Definition}

\newtheorem{example}[theorem]{Example}


\begin{document}
\title[DH-measure for filtered linear series]
{On Duistermaat-Heckman measure for filtered linear series}
\author{Nathan Grieve}
\address{Department of Mathematics \& Computer Science,
Royal Military College of Canada, P.O. Box 17000,
Station Forces, Kingston, ON, K7K 7B4, Canada
}
\address{
School of Mathematics and Statistics, 4302 Herzberg Laboratories, Carleton University, 1125 Colonel By Drive, Ottawa, ON, K1S 5B6, Canada
}
\address{D\'{e}partement de math\'{e}matiques, Universit\'{e} du Qu\'{e}bec \`a Montr\'{e}al, Local PK-5151, 201 Avenue du Pr\'{e}sident-Kennedy, Montr\'{e}al, QC, H2X 3Y7, Canada}
\email{nathan.m.grieve@gmail.com}

\begin{abstract}  
We revisit work of S. Boucksom, C. Favre, and M. Jonsson (J. Algebraic Geom. \textbf{18} (2009), no. 2, 279--308);  Boucksom and H. Chen (Compos. Math. \textbf{147} (2011), no. 4, 1205--1229); and S. Boucksom, A. K\"{u}ronya, C. Maclean, and T. Szemberg (Math. Ann. \textbf{361} (2015), no.~3--4, 811--834).  
The key point is to associate a Duistermaat-Heckman measure to a filtered big linear series on a given projective variety.  The expectation of the measure admits a description via the theory of Newton-Okounkov bodies.  Such considerations have origins in symplectic geometry.  They have applications for $\mathrm{K}$-stability and Diophantine arithmetic geometry of projective varieties. 
 \\

Nous revisitons les travaux de S. Boucksom, C. Favre, and M. Jonsson (J. Algebraic Geom. \textbf{18} (2009), no. 2, 279--308);  Boucksom et H. Chen (Compos. Math. \textbf{147} (2011), no. 4, 1205--1229); et S. Boucksom, A. K\"{u}ronya, C. Maclean, et T. Szemberg (Math. Ann. \textbf{361} (2015), no.~3--4, 811--834).  Nous \'{e}tudions deux r\'{e}sultats, qui sont \`{a} l'intersection de la $\mathrm{K}$-stabilit\'{e}, de la g\'{e}om\'{e}trie arithm\'{e}tique Diophantienne, et de la th\'{e}orie des corps convexe de Newton-Okounkov. Ils se rapportent \`{a}  des filtrations de grands syst\`{e}mes lin\'{e}aire sur des vari\'{e}t\'{e}s projectives et \`{a} l'existence de une mesure de Duistermaat-Heckman.   La mesure de Duistermaat-Heckman vient de la g\'{e}om\'{e}trie symplectique. L'esp\'{e}rance de la mesure peut \^{e}tre calcul\'{e}e par la th\'{e}orie de Newton-Okounkov.   
\end{abstract}
\thanks{\emph{Mathematics Subject Classification (2020):} Primary 14C20; Secondary 13A18.}
\thanks{Keywords:  Duistermaat-Heckman measure; Newton-Okounkov body; Restricted volume functions.}
\thanks{I hold grants DGECR-2021-00218 and RGPIN-2021-03821 from the Natural Sciences and Engineering Research Council of Canada.}

\maketitle

\section{Introduction}

Let $L$ be a big line bundle over an irreducible projective variety $X$. 
Our purpose here, is to revisit \cite[Theorem 1.11]{Boucksom:Chen:2011} and \cite[Theorem 2.24]{Boucksom:Kuronya:Maclean:Szemberg:2015}, which establish existence of a \emph{Duistermaat-Heckman measure} with respect to certain  \emph{linearly bounded above}  and \emph{pointwise bounded below} \emph{$\RR$-filtrations} of the section ring 
$$
R = R(X,L) := \bigoplus_{m\geq 0} \H^0(X,mL) = \bigoplus_{m \geq 0} R_m \text{.}
$$
A key tool is the theory of Newton-Okounkov bodies (\cite{Khovanskii:1992}, \cite{Okounkov:2003}, \cite{Lazarsfeld:Mustata:2009} and \cite{Kaveh:Khovanskii:2012}).

Here, and in what follows, we work over a fixed algebraically closed characteristic zero base field $\kk$. 
Further, we recall that a line bundle $L$ on an irreducible projective variety $X$ is \emph{big} if its \emph{volume}
$$
\Vol(L) := \limsup_{m \to \infty} \frac{h^0(X,mL)}{m^d/d!} \text{,}
$$
for
$$
d := \dim X
$$
and
$$
h^0(X,mL) := \dim \H^0(X,mL) \text{,}
$$
is nonzero.  We refer to \cite[Sections 2.2A and 2.2.C]{Laz} for more details about big line bundles and their volumes.

In this article, by a Duistermaat-Heckman measure, we mean a certain  singular probability measure, with compact support, on the real number line.  It arises as the limit of discrete measures that are associated to certain kinds of filtered linear series.  

To formulate our main theorems, let $R$ be the section ring of $L$ a big line bundle on an irreducible projective variety $X$ over an algebraically closed characteristic zero field $\kk$.  Moreover, fix  a  \emph{decreasing}, \emph{left-continuous}, \emph{pointwise bounded below} and  \emph{linearly bounded above}  \emph{multiplicative}  \emph{$\RR$-filtration }
$$\mathcal{F}^\bullet = \mathcal{F}^\bullet R$$ 
of $R$ (see Definition \ref{linearly:bounded:above}).  The \emph{vanishing numbers} of $\mathcal{F}^\bullet $, for $m \in \ZZ_{\geq 0}$, are the sequence of real numbers
$$
a_{\min}(mL) = a_0(mL) \leq \dots \leq a_{n_m}(mL) = a_{\max}(mL) \text{,}
$$
which are defined by the condition that
$$
a_j(mL) := 
\inf 
\{ 
t \in \RR : \operatorname{codim} \mathcal{F}^t \H^0(X, mL) \geq j+1
\} \text{.}
$$
The discrete measures that they determine are described as
\begin{equation}\label{DH:measures:intro:eqn}
\nu_m = \nu_m(t) := \frac{1}{h^0(X,mL)} \sum_{j = 0}^{n_m} \delta_{m^{-1} a_j(mL)}(t)\text{.}
\end{equation}

In what follows, we establish Theorems \ref{DH:Expect} and \ref{DH:restricted:volume:thm} below.  A key tool is the theory of Newton-Okounkov bodies, \cite{Lazarsfeld:Mustata:2009}.  Another is the differentiability of the volume function, \cite[Theorem A]{Boucksom:Favre:Jonsson:2009}.

Theorem \ref{DH:Expect}, below, encompasses \cite[Theorem 1.11]{Boucksom:Chen:2011}.  In our formulation, we state explicitly the fact that the limit expectation of the measures \eqref{DH:measures:intro:eqn} coincide with the expectations of the limit measure.  (Compare with \cite[Remark 1.12 and Corollary 1.13]{Boucksom:Chen:2011} and \cite[Section 3.3]{Grieve:2018:autissier}.)

\begin{theorem}
\label{DH:Expect}
Let $\mathcal{F}^\bullet = \mathcal{F}^\bullet R$ be a decreasing, left-continuous,  pointwise bounded below and linearly bounded above multiplicative $\RR$-filtration of
the section ring
$$
R = R(X,L) := \bigoplus_{m\geq 0} \H^0(X,mL) = \bigoplus_{m \geq 0} R_m \text{,}
$$
for $L$ a big line bundle on an irreducible projective variety $X$.  Then, the corresponding discrete measures \eqref{DH:measures:intro:eqn},
which are determined by $\mathcal{F}^\bullet$, 
converge weakly to  a limit measure
$$
\nu := \lim_{m \to \infty} \nu_m \text{.}
$$
Furthermore, the limit of the expectations of the measures $\nu_m$ coincides with the expectation of the limit measure $\nu$.  In other words 
$$
\mathbb{E}(\nu) = \lim_{m \to \infty} \mathbb{E}(\nu_m) \text{.}
$$
\end{theorem}

The following Theorem \ref{DH:restricted:volume:thm} includes \cite[Theorem 2.24]{Boucksom:Kuronya:Maclean:Szemberg:2015}.  
For example, as mentioned in \cite[Theorem 2.24]{Boucksom:Kuronya:Maclean:Szemberg:2015}, the underlying projective variety need not be nonsingular.  On the other hand, here, we recall, following closely the approach of \cite[Section 4.2]{Boucksom:Favre:Jonsson:2009}, a few additional details for establishing the more general result.  

Another aspect to the proof of Theorem \ref{DH:restricted:volume:thm} uses the differentiability property of the volume function (\cite[Theorem A]{Boucksom:Favre:Jonsson:2009}).  We also formulate  Theorem \ref{DH:restricted:volume:thm} in terms of divisors over the given normal variety as opposed to the formulation of divisorial valuations as is done in \cite{Boucksom:Kuronya:Maclean:Szemberg:2015}.  Finally, note that with the  nonsingularity assumption   \cite[Theorem 2.24]{Boucksom:Kuronya:Maclean:Szemberg:2015} follows after applying \cite[Corollary C]{Boucksom:Favre:Jonsson:2009}.  Here we note that, in the more general context, the same conclusion follows as a consequence of \cite[Theorem A]{Boucksom:Favre:Jonsson:2009}.

\begin{theorem}
\label{DH:restricted:volume:thm}
Let $L$ be a big line bundle on a normal irreducible projective variety $X$ and $E$ an irreducible and reduced nonzero Cartier divisor supported on some normal proper model of $X$.  In particular, $E$ is a divisor over $X$.
Fix such a model 
$$
\pi \colon Y \rightarrow X
$$
with the property that 
$$E \subseteq Y $$  
and let
$\mathcal{F}^\bullet $ 
be the corresponding filtration of the section ring that is determined by $E$.  In this context, the limit measure $\nu$ may be described as
$$
\nu = \nu_{L,E} =  \frac{\Vol_{X|E}(\pi^*L - t E)}{\Vol(L)/\operatorname{dim} X } \mathrm{d}t \text{.}
$$
\end{theorem}

We prove Theorem \ref{DH:Expect} in Section \ref{DH:Proof} and  Theorem \ref{DH:restricted:volume:thm} in Section \ref{restricted:volume:functions}.

To place matters into their proper context, it is important to note that the theory of Duistermaat-Heckman measure, and its moments, appears in several 
contexts.  For example, there are  applications for test configurations and questions that surround the $\mathrm{K}$-stability of polarized projective varieties.
They build on earlier work of Duistermaat and Heckman, \cite{Duistermaat:Heckman:1982}, and 
Donaldson, \cite{Donaldson:2002}.
In more recent times, the structure of the distributions that surround these measures has been clarified in \cite{Boucksom:Chen:2011}, \cite{Boucksom:Kuronya:Maclean:Szemberg:2015} and \cite{Boucksom-Hisamoto-Jonsson:2016}.
 
It is also helpful to recall, as in \cite{Okounkov:1996}, the manner in which Duistermaat-Heckman measure arises within the traditional realm of Mumford's Geometric Invariant Theory.  Consider, for example, the case of a connected reductive group $\G$ acting on a $\G$-linearized polarized irreducible projective variety $(X,L)$.  By linearity of the $\G$-action, the section ring
$$
R = R(X,L)
$$
admits a decomposition as a $\G$-module
\begin{equation}\label{section:ring:weight:decomp}
R = \bigoplus_{m \geq 0} \H^0(X,mL) = \bigoplus_{m \geq 0} \bigoplus_{\lambda} V_\lambda^{\oplus \mu_m(\lambda)} \text{.}
\end{equation}
In the decomposition \eqref{section:ring:weight:decomp}, $V_\lambda$ is the irreducible $\G$-module with highest weight $\lambda$; furthermore, $\mu_m(\lambda)$ is the multiplicity of $V_\lambda$ in $\H^0(X,mL)$.

Within this setting,  existence of a Duistermaat-Heckman measure allows for an asymptotic quantitative study of the extent to which the multiplicities $\mu_m(\lambda)$ equidistribute.

Returning to our specific focus here, namely the general study of filtered linear series (and filtrations of section rings), as explained in \cite[Section 1]{Boucksom:Kuronya:Maclean:Szemberg:2015} and \cite[Section 1]{Grieve:2018:autissier}, let us mention that some motivation arises from the study of Weierstrass points, and more generally vanishing sequences associated to linear series, on curves. 

Moreover, establishing an analogue of filtered linear series, within the framework of adelically normed graded linear series is a key feature of the works \cite{Boucksom:Chen:2011} and \cite{Chen:2010}. Both \cite{Chen:2010} and \cite{Boucksom:Chen:2011} are motivated, in part, by T.~Fujita's approximation theorem, for Zariski decompositions, \cite{Fujita:1994}. They obtain analogous results within the adelic (Arakelov) setting.   

For completeness, let us recall these adelic results, from \cite{Boucksom:Chen:2011}, in some detail.  Indeed, working over a base number field $\KK$, let $X$ be a $d$-dimensional projective variety and $L$ a big line bundle on $X$.  Over the ring of $\KK$-integers, let $\mathcal{X}$ be a flat projective model of $X$ and let $\mathcal{L}$ be a line bundle on $\mathcal{X}$ and extending $L$. 

Given $|\cdot|$, a conjugate invariant continuous Hermitian metric on the complexification of $L$, consider, for each $m \in \ZZ_{\geq 0}$, the finite sets of \emph{small sections}
$$
\hat{\H}(m\overline{L}) := \left\{ s \in \H^0(\mathcal{X},m\mathcal{L}) : \sup_{X(\CC)} |s| \leq 1 \right \}\text{.}
$$

In \cite{Boucksom:Chen:2011}, developing further the viewpoint of Yuan \cite{Yuan:2009}, Boucksom and Chen establish existence of an \emph{arithmetic Okounkov body}
$$
\hat{\Delta}(\overline{L}) \subseteq \RR^{d+1} \text{.}
$$
It is defined as
$$
\hat{\Delta}(\overline{L}) := \{(x,t) \in \Delta(L) \times \RR : 0 \leq t \leq G_{\mathcal{F}_{\min}}(x) \} \text{.}
$$
Here, $\Delta(L)$ is the Newton-Okounkov body of $L$ and  $G_{\mathcal{F}_{\min}}$ is the \emph{concave transform} of the adelic minima filtration.
 
A special case of \cite[Theorem 2.8]{Boucksom:Chen:2011}, is then that the volume of this compact convex set, $\hat{\Delta}(\overline{L})$, may be described as
$$
\Vol \hat{\Delta}(\overline{L}) 
= \lim_{m \to \infty} \frac{\log \# \hat{\H}(m\overline{L})}{m^{d+1}[\KK:\QQ]}  \text{.} 
$$
Further, this \emph{normalized arithmetic volume}, $\Vol \hat{\Delta}(\overline{L})$, 
can be approximated by the arithmetic volumes of finitely generated subseries.

Finally, it is important to mention the more recent progress, which has arisen within the areas of $\mathrm{K}$-stability and higher dimensional Diophantine approximation (for projective varieties).  The key objects of interest are invariants that are associated to those filtrations of linear series which arise via orders of vanishing along divisorial valuations and subschemes.  

From the perspective of $\mathrm{K}$-stability,  this was emphasized by Donaldson, \cite{Donaldson:2002}, and Ross-Thomas, \cite{Ross:Thomas:2006} and \cite{Ross:Thomas:2007}.  For the case of $\mathrm{K}$-stability for $\QQ$-Fano varieties, a key result is K.~Fujita's  valulative criterion, \cite{Fujita:2019}.  It expands on earlier work of C. Li, \cite{Li:2017a}, Li and Xu, \cite{Li:Xu:2014}, and G.~Tian \cite{Tian:1997}.
Here, we place emphasis on the connection to Newton-Okounkov bodies.

A detailed account of the algebraic theory of $\mathrm{K}$-stability has been given by Boucksom-Hisamoto-Jonsson, \cite{Boucksom-Hisamoto-Jonsson:2016}, while the foundations of the theory of Newton-Okounkov bodies is due to Khovanskii, Okounkov, Lazarsfeld-Musta{\c{t}}{\v{a}} and Kaveh-Khovanskii. (See \cite{Khovanskii:1992}, \cite{Okounkov:2003}, \cite{Lazarsfeld:Mustata:2009} or \cite{Kaveh:Khovanskii:2012}.)  

This theory was expanded upon to treat the case of reductive group actions in \cite{Kaveh:Khovanskii:2012b}.  That the theory of Newton-Okounkov bodies has applications to the study of filtrations on linear series, is a key point of \cite{Boucksom:Chen:2011}, \cite{Witt-Nystrom:2012} and \cite{Boucksom:Kuronya:Maclean:Szemberg:2015}.  A recent extension to the theory of Newton-Okounkov bodies, to treat the case of abstract subsemigroups of $\ZZ^n$, is given in \cite{Kuronya:Maclean:Roe:2021b} and \cite{Kuronya:Maclean:Roe:2021a}.

In Section \ref{OK:bodies:concave:transform}, we summarize  the  construction of Newton-Okounkov bodies and concave transform for linearly bounded above and pointwise bounded below $\RR$-filtrations of section rings.   In Section \ref{Preliminiaries:Filtered:Algebras}, we discuss our conventions for filtered linear series.  We place some emphasis on those filtrations that arise via orders of vanishing along a divisor. (See Example \ref{orders:vanishing}.)

More recently, it has been discovered that Duistermaat-Heckman measures and the theory of the Newton-Okounkov bodies, for linear series on polarized projective varieties, arise naturally upon considering Diophantine arithmetic questions.  A collection of works that surround this perspective include \cite{Faltings:Wustholz}, \cite{Evertse:Ferretti:2002}, \cite{Ferretti:2003}, \cite{McKinnon-Roth}, \cite{Ru:Vojta:2016}, \cite{Ru:Vojta:2021}, \cite{Ru:Wang:2016}, \cite{Grieve:Function:Fields}, \cite{Grieve:2018:autissier}, \cite{Grieve:chow:approx},  \cite{Heier:Levin:2017}, \cite{Grieve:toric:gcd:2019}, \cite{Grieve:Divisorial:Instab:Vojta} and \cite{Grieve:points:bounded:degree} (among others).  

Elaborating on the interactions amongst these Diophantine arithmetic results and those that within the area of $\mathrm{K}$-stability and the theory of Newton-Okounkov bodies, was one purpose of \cite{Grieve:2018:autissier}.  In particular, it builds on earlier work of Faltings and W\"{u}stholz, \cite{Faltings:Wustholz}, R.~G. Ferretti, \cite{Ferretti:2000} and \cite{Ferretti:2003}, McKinnon-Roth, \cite{McKinnon-Roth}, and Ru-Vojta, \cite{Ru:Vojta:2016}.  

As some evidence for the utility of this reciprocity, between techniques in $\mathrm{K}$-stability and Diophantine arithmetic geometry, the work \cite{Grieve:Divisorial:Instab:Vojta} demonstrates how the concept K-instability, for $\QQ$-Fano varieties with canonical singularieties, implies instances of Vojta's Main Conjecture.  Additionally, \cite[Section 6]{Grieve:toric:gcd:2019} shows that, for the case of $\PP^1 \times \PP^1$ blown-up along a torus invariant point, calculations with Newton-Okounkov bodies allow for an improvement of \cite[Corollary 1.12]{Ru:Vojta:2016} and \cite[Proposition 8]{Guo:Wang:2019}.

As it turns out, a key point that underlies all of this,
is the fact that the expectation of Duistermaat-Heckman measure for filtered linear series may be understood, via the theory of concave transforms, in terms of Newton-Okounkov bodies and certain limit expectations, which are closely related to the theory of Chow weights for polarized projective varieties.  This is discussed in \cite{Grieve:2018:autissier}, \cite{Grieve:toric:gcd:2019}, \cite{Grieve:Divisorial:Instab:Vojta} and \cite{Grieve:chow:approx}.  These works provide additional context and motivation for exposing the main results from \cite{Boucksom:Chen:2011} and \cite{Boucksom:Kuronya:Maclean:Szemberg:2015}.  This is our main purpose here.  (See Theorems \ref{DH:Expect} and \ref{DH:restricted:volume:thm}.)

\section{Filtered linear series}\label{Preliminiaries:Filtered:Algebras}

In this section, we fix our conventions about filtrations of the section ring
$$
R = R(X,L) := \bigoplus_{m\geq 0} \H^0(X,mL) = \bigoplus_{m \geq 0} R_m \text{,}
$$
for $L$ a big line bundle on $X$, an irreducible projective variety over $\kk$.
Our conventions and notations closely follow those of \cite[Definition 1.3]{Boucksom:Chen:2011} and \cite{Boucksom:Kuronya:Maclean:Szemberg:2015}.  

Specifically, the $\RR$-filtrations 
$$\mathcal{F}^\bullet = \mathcal{F}^\bullet R = \left( \mathcal{F}^t R_m \right)_{t \in \RR, m \in \ZZ_{\geq 0}}$$ 
that we consider have the properties that
\begin{enumerate}
\item[(i)]{
if $m \in \ZZ_{\geq 0}$, then
\begin{itemize}
\item{$\mathcal{F}^t R_m \subseteq \mathcal{F}^{t'} R_m$, for all $t,t'\in \RR$ with $t \geq t'$;}
\item{$\mathcal{F}^t R_m = \mathcal{F}^{t - \epsilon} R_m$, for all $t \in \RR$ and all $0 < \epsilon \ll 1$; and}
\end{itemize}
}
\item[(ii)]{
if $m,m' \in \ZZ_{\geq 0}$ and $t,t'\in\RR$, then
$$ \mathcal{F}^t R_m \cdot \mathcal{F}^{t'}R_m \subseteq \mathcal{F}^{t + t'} R_{m+m'}\text{.}$$ 
}
\end{enumerate}

Following the terminology of \cite{Boucksom:Chen:2011}, such filtrations are \emph{decreasing}, \emph{left-continuous}
and \emph{multiplicative}.

Put
$$
a_j(mL) := \inf \{ t \in \RR : \operatorname{codim}_{\kk} \mathcal{F}^t R_m \geq j + 1 \}\text{.}
$$
These are the \emph{vanishing numbers} of $R$, with respect to $\mathcal{F}^\bullet$.  Evidently
$$
 a_0(mL) \leq \dots \leq a_{n_m}(mL) 
$$
for 
$
n_m := \dim R_m - 1.
$

Furthermore, the non-increasing left-continuous step functions
$$
t \mapsto \dim_\kk \mathcal{F}^t R_m\text{, }
$$
for $m \in \ZZ_{\geq 0}$, satisfy the condition that
$$
\dim_\kk \mathcal{F}^t R_m = \dim R_m -  j
$$
if and only if 
$$
t \in \ ]a_{j-1}(mL), a_j(mL)].
$$
Here, we have also set
$$
a_{-1}(mL) = - \infty
$$
and
$$
a_{n_m+1}(mL) = + \infty \text{.}
$$
It follows, in the sense of distributions, that
$$
 \frac{d}{dt} \dim \mathcal{F}^t R_m = - \sum_{j=0}^{n_m} \delta_{a_j(mL)}  \text{.}
$$

For later use, we set
$$
a_{\min}(mL) :=  a_0(mL)\text{, }
$$
$$
a_{\max}(mL) := a_{n_m}(mL) 
$$
and
$$
s(mL,\mathcal{F}^\bullet) := \sum_{a_j(mL) > 0} a_j(mL) \text{.}
$$
Here, by convention, we set 
$$
s(mL, \mathcal{F}^\bullet) = 0
$$
in case that 
$$
a_{\max}(mL) \leq 0 \text{.}
$$

Our main interest here is to study those filtrations which are \emph{pointwise bounded below} and \emph{linearly bounded above}, in the senes of Definition \ref{linearly:bounded:above}.  

\begin{defn}[See {\cite[Definition 1.3]{Boucksom:Chen:2011}}]\label{linearly:bounded:above}  An $\RR$-filtration $\mathcal{F}^\bullet$ of $R$ is \emph{pointwise bounded below} if 
$$\mathcal{F}^t R_m = R_m\text{,}$$ for all $t \ll - 1$ and all $m \in \ZZ_{\geq 0}$.  It is \emph{linearly bounded above} if there exists a constant $C > 0$ such that
$$
a_{\max}(mL) \leq C m \text{,}
$$
for all $m \in \ZZ_{\geq 0}$.
\end{defn}

A natural class of linearly bounded above filtrations arise via orders of vanishing along divisors.  

\begin{example}\label{orders:vanishing}
Let $L$ be a big line bundle on an irreducible normal projective variety $X$.  Let $E$ be a nonzero effective Cartier divisor on some normal proper model of $X$.  
Each such Cartier divisor $E$ over $X$ determines a linearly bounded above $\RR$-filtration 
$$\mathcal{F}^\bullet = \mathcal{F}^\bullet R$$
of the section ring
$$
R := \bigoplus_{m\geq 0} \H^0(X,mL) = \bigoplus_{m \geq 0} R_m \text{.} 
$$
Indeed, such filtrations have the form
$$
\mathcal{F}^t R_m := \H^0(Y, m\pi^* L -  \lceil t E \rceil  )\text{, }
$$
for 
$$\pi \colon Y \rightarrow X$$
some normal proper model with 
$$E \subseteq Y\text{.}$$  
Evidently, such filtrations are pointwise bounded below.
That such filtrations are linearly bounded from above follows from the fact that
$$
\mathcal{F}^t R_m = 0
$$
for all $t > \gamma_{\mathrm{eff}} m$, where
$$\gamma_{\mathrm{eff}} := \inf \{ t : \pi^*L - t E  \text{ is pseudoeffective} \} < \infty \text{.}$$
\end{example}

Returning to more general considerations, given a pointwise bounded below and linearly bounded above filtration 
$\mathcal{F}^\bullet$,
of the section ring
$$
R := \bigoplus_{m\geq 0} \H^0(X,mL) = \bigoplus_{m \geq 0} R_m \text{,} 
$$
of a big line bundle $L$ on an irreducible projective variety $X$,
we set
$$
R^t_m := \mathcal{F}^{tm} R_m.
$$
In this way, we obtain graded subalgebras
$$
R^t_\bullet := \bigoplus_{m \geq 0} R^t_m \subseteq R
$$
for each $t \in \RR$.

We also make explicit note of the following special case of \cite[Lemma 1.4]{Boucksom:Chen:2011}.  We include a proof for the sake of completeness.

\begin{lemma}[{\cite[Lemma 1.4]{Boucksom:Chen:2011}}]\label{a:max:lemma}
Let $\mathcal{F}^\bullet$ be a linearly bounded above filtration of the section ring 
$$
R := \bigoplus_{m\geq 0} \H^0(X,mL) = \bigoplus_{m \geq 0} R_m \text{,} 
$$
for $L$ a big line bundle on an irreducible projective variety $X$.
Then
$$
a_{\max}(R,\mathcal{F}^\bullet) = \lim_{m \to \infty} \frac{ a_{\max}(mL) }{ m } = \sup_{m \geq 1} \frac{a_{\max}(mL)}{m}\text{.}
$$
\end{lemma}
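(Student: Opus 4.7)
The plan is to interpret the identity as an instance of Fekete's lemma for superadditive sequences applied to $m \mapsto a_{\max}(R_m, \mathcal{F}^\bullet)$. Concretely, I would define $a_{\max}(R,\mathcal{F}^\bullet)$ to be the common value $\sup_{m \geq 1} a_{\max}(R_m,\mathcal{F}^\bullet)/m$ (which is finite by hypothesis), and then deduce existence and equality of the limit from superadditivity.

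First I would establish the key superadditivity estimate
\[
a_{\max}(R_{m+n},\mathcal{F}^\bullet) \geq a_{\max}(R_m,\mathcal{F}^\bullet) + a_{\max}(R_n,\mathcal{F}^\bullet)
\]
for all $m,n$ with $R_m, R_n \neq 0$. To do this, pick nonzero sections $\sigma \in \mathcal{F}^{s}R_m$ with $s$ slightly less than $a_{\max}(R_m,\mathcal{F}^\bullet)$ and $\tau \in \mathcal{F}^{t} R_n$ with $t$ slightly less than $a_{\max}(R_n,\mathcal{F}^\bullet)$ (by left-continuity one can in fact take $s=a_{\max}(R_m,\mathcal{F}^\bullet)$ and similarly for $t$). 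By multiplicativity of the filtration the product $\sigma\cdot\tau$ lies in $\mathcal{F}^{s+t}R_{m+n}$; since $R = R(X,L)$ is the section ring of a big line bundle on the irreducible variety $X$ it is a domain, so $\sigma\cdot\tau \neq 0$. Hence $\mathcal{F}^{s+t}R_{m+n} \neq 0$, which forces $a_{\max}(R_{m+n},\mathcal{F}^\bullet) \geq s+t$, and we conclude by letting $s$ and $t$ approach their suprema.

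Next I would invoke Fekete's lemma in its superadditive form: for any superadditive sequence $(u_m)_{m \geq 1}$ of real numbers,
\[
\lim_{m \to \infty} \frac{u_m}{m} = \sup_{m \geq 1} \frac{u_m}{m},
\]
whenever the supremum is finite. Applying this to $u_m := a_{\max}(R_m,\mathcal{F}^\bullet)$ yields the second equality; finiteness of the supremum is exactly the linearly bounded above hypothesis, which supplies a constant $C>0$ such that $u_m/m \leq C$ for all $m$. The first equality is then simply the definition of $a_{\max}(R,\mathcal{F}^\bullet)$.

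The main obstacle is really only the verification of superadditivity, and in particular the non-vanishing of the product $\sigma \cdot \tau$. This hinges on $R$ being an integral domain, which I would justify by noting that, since $X$ is irreducible and $L$ is big, $R(X,L)$ embeds as a graded subring of a function field (or equivalently, that multiplication of nonzero sections of $mL$ and $nL$ is nonzero at a generic point). Once this point is settled, the remainder of the argument is formal: superadditivity plus linear boundedness plus Fekete.
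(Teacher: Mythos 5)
Your proposal is correct and follows essentially the same route as the paper: the paper's proof likewise reduces the statement to superadditivity of $m \mapsto a_{\max}(R_m,\mathcal{F}^\bullet)$ (for $m \gg 1$) together with Fekete's lemma for superadditive sequences. In fact you supply more detail than the paper does, since the paper merely asserts superadditivity while you verify it via multiplicativity of the filtration and the fact that $R(X,L)$ is a domain.
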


\begin{proof}
Observe that the quantities $a_{\max}(mL)$ are super-additive in $m$, for $m \gg 1$.  In particular, they have the property that
$$
a_{\max}(mL) + a_{\max}(nL) \leq a_{\max}((m+n)L) \text{,}
$$
for all $m, n \in \ZZ_{\geq 1}$.  So, the desired results follow because of the fact that, for super-additive sequences, the equality
$$
a_{\max} = \lim_{m \to \infty} \frac{a_m}{m} = \sup_{m \geq 1} \frac{a_m}{m} 
$$
holds true.
\end{proof}

\section{Newton-Okounkov bodies and concave transforms}\label{OK:bodies:concave:transform}

Let 
$\mathcal{F}^\bullet$
be a pointwise bounded below and linearly bounded above filtration of the section ring
$$
R := \bigoplus_{m\geq 0} \H^0(X,mL) = \bigoplus_{m \geq 0} R_m \text{,} 
$$
of $L$ a big line bundle on a $d$-dimensional irreducible projective variety $X$.   Our goal here, is to recall the construction of the \emph{concave transform} of $\mathcal{F}^\bullet$ with respect to a given admissible flag of subvarieties.  

In order to do so, we recall briefly, the basic theory of Newton-Okounkov bodies, from \cite{Lazarsfeld:Mustata:2009}, and adopt similar notation.

Let 
$$V_\bullet = \bigoplus_{m \geq 0} V_m$$ 
be a graded linear series of $L$ and
assume that $V_\bullet$ contains an \emph{ample series}.  Then $V_\bullet$ is a graded $\kk$-subalgebra of $R(X,L)$ and  \cite[Definition 2.9 Condition (C) point (i)]{Lazarsfeld:Mustata:2009} holds true.  

Now, upon fixing an \emph{admissible flag} of irreducible subvarieties
\begin{equation}\label{eqn:6}
X_\bullet \colon X = X_0 \supsetneq X_1 \supsetneq \hdots \supsetneq X_d = \{\mathrm{pt} \}
\end{equation}
we may associate to  $V_\bullet$, via the valuation like function
\begin{equation}\label{eqn:7}
\nu_{X_\bullet} \colon V_\bullet \setminus \{0\} \rightarrow \ZZ^d
\end{equation}
 that is determined by $X_\bullet$, the graded semigroup
$$
\Gamma(V_\bullet) := \{(\nu_{X_\bullet}(\sigma),m) : 0 \not = \sigma \in V_m \text{ and } m \geq 0 \}  \subseteq \ZZ_{\geq 0}^{d+1} \text{.}
$$

To construct the valuation like functions \eqref{eqn:7}, there is no loss in generality by assuming that each subvariety $X_{i+1}$,  for $i = 0,\dots,d-1$, is a Cartier divisor in $X_i$.  Next, given a nonnegative integer $m$, with 
$$\H^0(X,mL) \not = 0\text{,}$$ 
let
$$
0 \not = \sigma \in \H^0(X,\Osh_X(mL))
$$
be a nonzero section.  Put
$$
v_1 = v_1(\sigma) := \operatorname{ord}_{X_1}(\sigma) \text{.}
$$

Inductively, and by choosing a local equation for $X_{j+1}$ in $X_j$, we may construct a nonvanishing section
$$
0 \not = \sigma_j \in \H^0\left(X_j,mL |_{X_j} \otimes \bigotimes_{i=1}^j \Osh_{X_{i-1}}(-v_i X_i)|_{X_{j}}\right) \text{;}
$$
we then set
$$
v_{j+1} = v_{j+1}(\sigma_j) := \operatorname{ord}_{X_{j+1}}(\sigma_j) \text{.}
$$
In this way, each nonzero section
$$
0 \not = \sigma \in \H^0(X,mL) \text{,}
$$
determines a well-defined $d$-tuple of integers
\begin{equation}\label{eqn:8}
v(\sigma) = (v_1,\dots,v_d) \in \ZZ^d
\end{equation}
and depending on the fixed choice of admissible flag \eqref{eqn:6}.  These integer vectors \eqref{eqn:8} allow for the definition of the valuation like functions \eqref{eqn:7}.

In this context, the closed convex cone
$$
\Sigma(V_\bullet) \subseteq \RR^{d+1}\text{, }
$$
which is generated by $\Gamma(V_\bullet)$, has a compact convex basis
$$
\Delta(V_\bullet) := \Sigma(V_\bullet) \bigcap \left( \{1 \} \times \RR^d \right)\text{.}
$$
This is the \emph{Newton-Okounkov body} of $V_\bullet$.  It depends on the fixed choice of admissible flag $X_\bullet$.

On the other hand, following \cite[Definition 1.6]{Boucksom:Chen:2011}, the \emph{concave transform} of $\mathcal{F}^\bullet$ is the concave function
$$
G_{\mathcal{F}^\bullet} \colon \Delta(L) \rightarrow [-\infty, \infty[ \text{,}
$$
which is defined by
$$
x \mapsto \sup \left\{t \in \RR : x \in \Delta \left(R^t_\bullet \right)\right\}\text{.}
$$
Since $\mathcal{F}^\bullet$ is assumed to be pointwise bounded below, it follows that $G_{\mathcal{F}^\bullet}$ takes finite values on $\Delta(L)^\circ$, see \cite[Lemma 1.7]{Boucksom:Chen:2011}, and is continuous there too.
 Finally, via the concave transform, as in \cite[Definition 1.9]{Boucksom:Chen:2011}, the \emph{filtered Newton-Okounkov body} is the compact convex subset
$$
\Delta(\mathcal{F}^\bullet ) := \left\{ (x,t) \in \Delta(L) \times \RR \text{, } 0 \leq t \leq G_{\mathcal{F}^\bullet}(x) \right\} \subseteq \RR^{d+1} \text{.}
$$ 

\section{Proof of Theorem \ref{DH:Expect}}\label{DH:Proof}

In this section, we establish Theorem \ref{DH:Expect}.  Our  arguments follow closely those of \cite[Proof of Theorem 1.11]{Boucksom:Chen:2011} and \cite[Proof of Theorem 2.20]{Boucksom:Kuronya:Maclean:Szemberg:2015}.

\begin{proof}[Proof of Theorem \ref{DH:Expect}]
Let 
$$d := \dim X$$ 
and put 
$$
\mu_m := \frac{h^0(X,mL)}{m^d} \nu_m \text{.}
$$

Then $- \mu_m$ is the distributional derivative of the non-increasing left continuous step function
$$
g_m(t) := m^{-d} \dim \mathcal{F}^{mt} R_m = m^{-d} \dim R^t_m.
$$
Set
$$
g(t) := \Vol \left( \Delta(R_\bullet^t) \right)
$$
and recall that the graded subalgebras 
$$
R^t_\bullet := \bigoplus_{m \geq 0} R^t_m = \bigoplus_{m \geq 0} \mathcal{F}^{tm} R_m\text{,}
$$
for 
$$t < a_{\max}(R,\mathcal{F}^\bullet)\text{,}$$ 
contain an ample series \cite[Lemma 1.6]{Boucksom:Chen:2011}.

Thus, by the theory of Newton-Okounkov bodies, \cite[Lemma 2.12 and Theorem 2.13]{Lazarsfeld:Mustata:2009}, we obtain the convergence 
$$ 
\lim_{m\to \infty} g_m(t) = g(t).
$$
Furthermore, observe that the inequality 
$$
0 \leq g_m(t) \leq m^{-d} h^0(X,mL)
$$
is uniformly bounded.

Thus, by dominated convergence, we obtain the convergence
$$g_m \to g$$
in $L^1_{\mathrm{loc}}(\RR)$.  In particular
$$
- \mu_m = \frac{d}{dt}(g_m)  \to \frac{dg}{dt} 
$$
in the sense of distributions.

We now interpret these observations in terms of the concave transform $G_{\mathcal{F}^\bullet}$.  Let $\lambda$ denote the restriction of the Lebesgue measure to the interior of the Newton-Okounkov body $\Delta(L)$ and consider its pushforward with respect to the concave transform
$$
\mu := \left( G_{\mathcal{F}^\bullet} \right)_* \lambda \text{.}
$$
To complete the proof of Theorem \ref{DH:Expect}, it remains to show that
$$
\frac{dg}{dt}  = - \mu.
$$

To this end, first observe that
$$
\lim_{s \to t^-} g(s) = \lambda \left( \{G_{\mathcal{F}^\bullet} \geq t \} \right) \text{.}
$$
On the other hand 
$$
h(t) := \lambda \left( \{ G_{\mathcal{F}_\bullet} \geq t \} \right)
$$
and the discontinuity locus of $g(t)$ is at most countable.  Thus, we obtain the equality of distributions
$$
g = h \text{.}
$$

Then note that,
on $\RR$, we have 
$$
h(t) = \mu \left( \{x \in \RR : x \geq t \} \right)\text{;}
$$
it then follows, from integration theory, that
$$
\frac{dh}{dt} = - \mu\text{.}
$$
Moreover, since the measures $\mu_m$ and $\nu_m$ are related as
$$
\mu_m = \frac{h^0(X,mL)}{m^d} \nu_m\text{,}
$$
it follows that
$$
\mu = \lim_{m \to \infty} \mu_m  = \frac{\Vol_X(L)}{g!} \lim_{m \to \infty} \nu_m = \frac{\Vol_X(L)}{g!} \nu \text{.}
$$
This establishes weak convergence of measures 
$$\nu_m \to \nu\text{.}$$

Finally, 
similar to \cite[Corollary 1.13]{Boucksom:Chen:2011}, we observe how the Euclidean volume of the filtered Newton-Okounkov bodies relate to the Duistermaat-Heckman measures.

First of all, observe that
the expectation $\mathbb{E}(\nu)$, of the Duistermaat-Heckman measure $\nu$, determined by $\mathcal{F}^\bullet$, may be described as
$$
\mathbb{E}(\nu) = \frac{d!}{\Vol_X(L)} \int_0^\infty t \cdot \mathrm{d} \mu(t) \text{.}
$$
Indeed, in our present notation
$$
\frac{ s(mL,\mathcal{F}^\bullet)}{m^{d+1}} = \int_0^\infty t \cdot \mathrm{d}(\mu_m) \text{.}
$$
Furthermore, 
$$
\Vol(\Delta(\mathcal{F}^\bullet R)) = \int_0^\infty t \cdot \left( G_{\mathcal{F}^\bullet} \right)_* \lambda
$$
and, by Lemma \ref{a:max:lemma}, we have that
$$
a_{\max} := a_{\max}(R,\mathcal{F}^\bullet)  = \lim_{m \to \infty} \frac{a_{\max}(mL)}{m} \text{.}
$$
Finally, note that since 
$$G_{\mathcal{F}^\bullet} \leq a_{\max} \text{,}$$
the measures $\mu_m$ and $(G_{\mathcal{F}^\bullet})_* \lambda$ are both supported on the half open interval 
$$]-\infty, a_{\max}]\text{.}$$

Thus, since, the measures $\nu_m$ and $\mu_m$ both have uniform compact supports, the respective weak convergences 
$$\nu_m \to \nu$$ 
and 
$$\mu_m \to \mu$$ 
imply the desired conclusion
$$
\mathbb{E}(\nu) = \lim_{m \to \infty} \mathbb{E}(\nu_m) \text{.}
$$
\end{proof}

\section{Interpretation in terms of restricted volume functions}\label{restricted:volume:functions}
We now establish, closely following \cite[Theorem 2.24]{Boucksom:Kuronya:Maclean:Szemberg:2015}, Theorem \ref{DH:restricted:volume:thm}.  

\begin{proof}[Proof of Theorem \ref{DH:restricted:volume:thm}]
 Recall, that we are given a normal proper model 
$$
\pi \colon Y \rightarrow X 
$$
with the property that 
$$E \subseteq Y\text{.}$$  
Denote by 
$$h^0(X|E, L) = h^0(Y|E, \pi^* L) \text{,}$$ 
the rank of the restriction map
$$
\H^0(Y,\pi^* L) \rightarrow \H^0(E, \pi^*L|E).
$$
Let $d$ denote the dimension of $X$.
The restricted volume of $L$ along $E$, then takes the form
\begin{align*}
\Vol_{X|E} (L) & := \Vol_{Y|E}(\pi^* L) \\
 & = \limsup_{m \to \infty} \frac{(d-1)! }{m^{d-1 }} h^0(X|E,L) 
\text{.}
\end{align*}

Next, similar to \cite[Section 4.2]{Boucksom:Favre:Jonsson:2009}, in order to determine the nature of the restricted volume function $\Vol_{X|E} (L)$, we may replace $Y$ with some non-singular model 
$$
\pi' \colon Y' \rightarrow Y
$$
and $E$ with its strict transform $E'$ with respect to $\pi'$.  

Indeed, this follows in light of the relation
$$
\Vol_{Y|E}(\pi^*L) = \Vol_{Y'|E'}(L') \text{, }
$$
for $L'$ the pullback of $L$ to $Y'$ under the composition
$$
Y' \xrightarrow{\pi'} Y \xrightarrow{\pi} X \text{.}
$$
Note that, by normality of $X$, the fibres of such models over $X$ are connected.

Now, recall that the filtration
$
\mathcal{F}^\bullet 
$
is described as
$$
\mathcal{F}^t R_m = \H^0(Y, m \pi^* L - \lceil t E \rceil) \text{.}
$$
It is thus evident that 
$$
a_{\max}(R,\mathcal{F}^\bullet) = \sup \{t > 0 : \pi^* L - t E \text{ is big} \} \text{.}
$$ 

Next, set 
$$v = E$$ 
and
$$
\Vol(L, v \geq t) := \lim_{m \to \infty} \frac{d!}{m^d} h^0(Y, m \pi^* L - m t E) \text{.}
$$
In this notation
$$
\nu = \frac{\Vol(L , x \geq t) }{ \Vol(L) } \mathrm{d}t\text{.}
$$

Finally, by the continuity of the volume function, the limit measure $\nu$ is absolutely continuous with respect to Lebesgue measure on the interval 
$$]-\infty, a_{\max}(R, \mathcal{F}^\bullet)[ \text{.}$$  
Furthermore, $\nu$ is the weak derivative of
$$
- \frac{\Vol(L, x \geq t)}{\Vol(L) } = - \frac{\Vol(\pi^*L - t E) }{ \Vol(L) } \text{.}
$$

The result then follows by the differentiability property of the volume function \cite[Theorem A]{Boucksom:Favre:Jonsson:2009}.  Indeed, recall that the restricted volume functions are related as
$$
\Vol_{X|E}(L) = \Vol_{Y|E}(\pi^*L) = \Vol_{Y'|E'}(L') \text{.}
$$
\end{proof}

\subsection*{Acknowledgements}   
It is my pleasure to thank colleagues for their interest, comments and discussions on related topics.   Further, I thank the Natural Sciences and Engineering Research Council of Canada for their support through my grants DGECR-2021-00218 and RGPIN-2021-03821.   Finally, I thank anonymous referees for their comments, careful reading and suggestions.  They helped to improve upon an earlier version of this article.

\providecommand{\bysame}{\leavevmode\hbox to3em{\hrulefill}\thinspace}
\providecommand{\MR}{\relax\ifhmode\unskip\space\fi MR }
\providecommand{\MRhref}[2]{%
  \href{http://www.ams.org/mathscinet-getitem?mr=#1}{#2}
}
\providecommand{\href}[2]{#2}


\begin{thebibliography}{10}

\bibitem{Boucksom:Chen:2011}
S.~Boucksom and H.~Chen, \emph{Okounkov bodies of filtered linear series},
  Compos. Math. \textbf{147} (2011), no.~4, 1205--1229.

\bibitem{Boucksom:Favre:Jonsson:2009}
S.~Boucksom, C.~Favre, and M.~Jonsson, \emph{Differentiability of volumes of
  divisors and a problem of {T}eissier}, J. Algebraic Geom. \textbf{18} (2009),
  no.~2, 279--308.

\bibitem{Boucksom-Hisamoto-Jonsson:2016}
S.~Boucksom, T.~Hisamoto, and M.~Jonsson, \emph{Uniform {K}-stability,
  {D}uistermaat-{H}eckman measures and singularities of pairs}, Ann. Inst.
  Fourier (Grenoble) \textbf{67} (2017), no.~2, 743--841.

\bibitem{Boucksom:Kuronya:Maclean:Szemberg:2015}
S.~Boucksom, A.~K\"{u}ronya, C.~Maclean, and T.~Szemberg, \emph{Vanishing
  sequences and {O}kounkov bodies}, Math. Ann. \textbf{361} (2015), no.~3-4,
  811--834.

\bibitem{Chen:2010}
H.~Chen, \emph{Arithmetic {F}ujita approximation}, Ann. Sci. \'{E}c. Norm.
  Sup\'{e}r. (4) \textbf{43} (2010), no.~4, 555--578.

\bibitem{Donaldson:2002}
S.~K. Donaldson, \emph{Scalar curvature and stability of toric varieties}, J.
  Differential Geom. \textbf{62} (2002), no.~2, 289--349.

\bibitem{Duistermaat:Heckman:1982}
J.~J. Duistermaat and G.~J. Heckman, \emph{On the variation in the cohomology
  of the symplectic form of the reduced phase space}, Invent. Math. \textbf{69}
  (1982), no.~2, 259--268.

\bibitem{Evertse:Ferretti:2002}
J.-H. Evertse and Roberto~G. Ferretti, \emph{Diophantine inequalities on
  projective varieties}, Int. Math. Res. Not. (2002), no.~25, 1295--1330.

\bibitem{Faltings:Wustholz}
Gerd Faltings and Gisbert W\"{u}stholz, \emph{Diophantine approximations on
  projective spaces}, Invent. Math. \textbf{116} (1994), no.~1-3, 109--138.

\bibitem{Ferretti:2000}
R.~G. Ferretti, \emph{Mumford's degree of contact and {D}iophantine
  approximations}, Compositio Math. \textbf{121} (2000), no.~3, 247--262.

\bibitem{Ferretti:2003}
\bysame, \emph{Diophantine approximations and toric deformations}, Duke Math.
  J. \textbf{118} (2003), no.~3, 493--522.

\bibitem{Fujita:2019}
K.~Fujita, \emph{A valuative criterion for uniform {K}-stability of {$\Bbb
  Q$}-{F}ano varieties}, J. Reine Angew. Math. \textbf{751} (2019), 309--338.

\bibitem{Fujita:1994}
T.~Fujita, \emph{Approximating {Z}ariski decomposition of big line bundles},
  Kodai Math. J. \textbf{17} (1994), no.~1, 1--3.

\bibitem{Grieve:Function:Fields}
N.~Grieve, \emph{Diophantine approximation constants for varieties over
  function fields}, Michigan Math. J. \textbf{67} (2018), no.~2, 371--404.

\bibitem{Grieve:2018:autissier}
\bysame, \emph{On arithmetic general theorems for polarized varieties}, Houston
  J. Math. \textbf{44} (2018), no.~4, 1181--1203.

\bibitem{Grieve:Divisorial:Instab:Vojta}
\bysame, \emph{Divisorial instability and {V}ojta's main conjecture for {$\Bbb
  Q$}-{F}ano varieties}, Asian J. Math. \textbf{24} (2020), no.~6, 995--1005.

\bibitem{Grieve:toric:gcd:2019}
\bysame, \emph{Generalized {GCD} for toric {F}ano varieties}, Acta Arith.
  \textbf{195} (2020), no.~4, 415--428.

\bibitem{Grieve:points:bounded:degree}
\bysame, \emph{On arithmetic inequalities for points of bounded degree}, Res.
  Number Theory \textbf{7} (2021), no.~1, Paper No. 1, 14.

\bibitem{Grieve:chow:approx}
\bysame, \emph{Expectations, concave transforms, {C}how weights, and {R}oth's
  theorem for varieties},  (Preprint).

\bibitem{Guo:Wang:2019}
J.~Guo and Julie T.-Y. Wang, \emph{Asymptotic gcd and divisible sequences for
  entire functions}, Trans. Amer. Math. Soc. \textbf{371} (2019), no.~9,
  6241--6256.

\bibitem{Heier:Levin:2017}
G.~Heier and A.~Levin, \emph{A generalized {S}chmidt subspace theorem for
  closed subschemes}, Amer. J. Math. \textbf{143} (2021), no.~1, 213--226.

\bibitem{Kaveh:Khovanskii:2012b}
K.~Kaveh and A.~G. Khovanskii, \emph{Convex bodies associated to actions of
  reductive groups}, Mosc. Math. J. \textbf{12} (2012), no.~2, 369--396, 461.

\bibitem{Kaveh:Khovanskii:2012}
\bysame, \emph{Newton-{O}kounkov bodies, semigroups of integral points, graded
  algebras and intersection theory}, Ann. of Math. (2) \textbf{176} (2012),
  no.~2, 925--978.

\bibitem{Khovanskii:1992}
A.~G. Khovanski\u{\i}, \emph{The {N}ewton polytope, the {H}ilbert polynomial
  and sums of finite sets}, Funktsional. Anal. i Prilozhen. \textbf{26} (1992),
  no.~4, 57--63, 96.

\bibitem{Kuronya:Maclean:Roe:2021b}
A.~K\"{u}ronya, C.~Maclean, and J.~Ro\'{e}, \emph{Concave transforms of
  filtrations and rationality of {S}eshadri constants}, Trans. Amer. Math. Soc.
  \textbf{374} (2021), no.~12, 8309--8332.

\bibitem{Kuronya:Maclean:Roe:2021a}
\bysame, \emph{Newton-{O}kounkov theory in an abstract setting}, Beitr. Algebra
  Geom. \textbf{62} (2021), no.~2, 375--395.

\bibitem{Laz}
R.~Lazarsfeld, \emph{Positivity in algebraic geometry. {I}}, vol.~48,
  Springer-Verlag, Berlin, 2004.

\bibitem{Lazarsfeld:Mustata:2009}
R.~Lazarsfeld and M.~Musta\c{t}\u{a}, \emph{Convex bodies associated to linear
  series}, Ann. Sci. \'{E}c. Norm. Sup\'{e}r. (4) \textbf{42} (2009), no.~5,
  783--835.

\bibitem{Li:2017a}
C.~Li, \emph{K-semistability is equivariant volume minimization}, Duke Math. J.
  \textbf{166} (2017), no.~16, 3147--3218.

\bibitem{Li:Xu:2014}
C.~Li and C.~Xu, \emph{Special test configuration and {K}-stability of {F}ano
  varieties}, Ann. of Math. (2) \textbf{180} (2014), no.~1, 197--232.

\bibitem{McKinnon-Roth}
D.~McKinnon and M.~Roth, \emph{Seshadri constants, diophantine approximation,
  and {R}oth's theorem for arbitrary varieties}, Invent. Math. \textbf{200}
  (2015), no.~2, 513--583.

\bibitem{Okounkov:1996}
A.~Okounkov, \emph{Brunn-{M}inkowski inequality for multiplicities}, Invent.
  Math. \textbf{125} (1996), no.~3, 405--411.

\bibitem{Okounkov:2003}
\bysame, \emph{Why would multiplicities be log-concave?}, The orbit method in
  geometry and physics ({M}arseille, 2000), Birkh\"{a}user Boston, Boston, MA,
  2003, pp.~329--347.

\bibitem{Ross:Thomas:2006}
J.~Ross and R.~Thomas, \emph{An obstruction to the existence of constant scalar
  curvature {K}\"{a}hler metrics}, J. Differential Geom. \textbf{72} (2006),
  no.~3, 429--466.

\bibitem{Ross:Thomas:2007}
\bysame, \emph{A study of the {H}ilbert-{M}umford criterion for the stability
  of projective varieties}, J. Algebraic Geom. \textbf{16} (2007), no.~2,
  201--255.

\bibitem{Ru:Vojta:2016}
M.~Ru and P.~Vojta, \emph{A birational {N}evanlinna constant and its
  consequences}, Amer. J. Math. \textbf{142} (2020), no.~3, 957--991.

\bibitem{Ru:Vojta:2021}
\bysame, \emph{An {E}vertse-{F}erretti {N}evanlinna constant and its
  consequences}, Monatsh. Math. \textbf{196} (2021), no.~2, 305--334.

\bibitem{Ru:Wang:2016}
M.~Ru and J.~T.-Y. Wang, \emph{A subspace theorem for subvarieties}, Algebra
  Number Theory \textbf{11} (2017), no.~10, 2323--2337.

\bibitem{Tian:1997}
G.~Tian, \emph{K\"{a}hler-{E}instein metrics with positive scalar curvature},
  Invent. Math. \textbf{130} (1997), no.~1, 1--37.

\bibitem{Witt-Nystrom:2012}
D.~Witt~Nystr\"{o}m, \emph{Test configurations and {O}kounkov bodies}, Compos.
  Math. \textbf{148} (2012), no.~6, 1736--1756.

\bibitem{Yuan:2009}
X.~Yuan, \emph{On volumes of arithmetic line bundles}, Compos. Math.
  \textbf{145} (2009), no.~6, 1447--1464.

\end{thebibliography}
\end{document}